\title[]{Global weak solutions for SQG in bounded domains}
\author{Peter Constantin}
\address{Department of Mathematics, Princeton University, Princeton, NJ 08544}
\email{const@math.princeton.edu}
\author{Huy Quang Nguyen}
\address{Program in Applied and Computational Mathematics, Princeton University, Princeton, NJ 08544}
\email{qn@math.princeton.edu}
\newcommand{\bq}{\begin{equation}}
\newcommand{\eq}{\end{equation}}
\newcommand{\bqa}{\begin{eqnarray*}}
\newcommand{\eqa}{\end{eqnarray*}}
\newcommand{\Rr}{\mathbb{R}}
\newcommand{\pa}{\partial}
\newcommand{\la}{\label}
\newcommand{\fr}{\frac}
\newcommand{\na}{\nabla}
\newcommand{\be}{\begin{equation}}
\newcommand{\ee}{\end{equation}}
\newcommand{\ba}{\begin{array}{l}}
\newcommand{\ea}{\end{array}}
\theoremstyle{plain}
\newtheorem{theo}{Theorem}[section]
\newtheorem{lemm}[theo]{Lemma}
\theoremstyle{definition}
\newtheorem{rema}[theo]{Remark}
\DeclareMathOperator{\cnx}{div}
\DeclareSymbolFont{pletters}{OT1}{cmr}{m}{sl}
\DeclareMathSymbol{s}{\mathalpha}{pletters}{`s}
\def\tt{\theta}
\def\D{\Delta}
\def\eps{\varepsilon}
\def\na{\nabla}
\def\la{\left\lvert}
\def\le{\leq}
\def\L#1{\langle #1 \rangle}
\def\mez{\frac{1}{2}}
\def\ra{\right\rvert}
\def\P{\mathbb P}
\def\I{\mathbb I}
\def\L{\Lambda}
\numberwithin{equation}{section}
\date{today}
\begin{document}
\begin{abstract}
We prove existence of global weak $L^2$ solutions of the inviscid SQG equation in bounded domains. 
\end{abstract}

\keywords{SQG, global weak solutions, bounded domains}

\noindent\thanks{\em{ MSC Classification:  35Q35, 35Q86.}}

\maketitle
\section{Introduction}
The surface quasigeostrophic equation (SQG) of geophysical significance \cite{held} has many similarities with the incompressible Euler equation \cite{cmt}. One difference however has to do with the behavior of the corresponding nonlinearities in rough function spaces: SQG has weak continuity in $L^2$, while the Euler equation does not. The weak continuity is due to a remarkable commutator structure, and this property was used to prove existence of global weak solutions for SQG in the whole space in the thesis of S. Resnick \cite{Res}. The weak continuity was revisited in the periodic case in \cite{CCW}, used in the proof of absence of anomalous dissipation in \cite{ctv} and generalized for equations with more singular constitutive laws in \cite{cccgw}. In this paper we are concerned with the 
issue of weak solutions in bounded domains.  The dissipative critical SQG has global weak solutions \cite{ConIgn} and global interior regularity \cite{ConIgn2}.
In this paper we prove that the inviscid equation has global $L^2$ weak solutions in  bounded domains. The commutator structure is modified by the absence of translation invariance. The commutator estimates from \cite{ConIgn} are used to handle the nonlinearity; additional commutator estimates, based on those in \cite{ConIgn2} are used to handle the ill effects of absence of translation invariance. The proof uses Galerkin approximations based on the eigenfunctions of the Laplacian with homogeneous Dirichlet boundary conditions. The result can also be obtained using a vanishing viscosity approximation.

Let $\Omega\subset \Rr^d$, $d\ge 2$, be an open bounded set with smooth boundary. The inviscid surface quasigeostrophic equation  in $\Omega$ is the equation
\bq\label{SQG}
\partial_t\tt  +u\cdot \nabla\tt =0,
\eq
where $\tt = \tt(x,t)$, $u = u(x,t)$ with $(x, t)\in \Omega\times [0, \infty)$ and with the velocity $u$ given by
\bq\label{u:defi}
u = R_D^\perp \tt :=\nabla^\perp (-\Delta)^{-\mez}\tt.
\eq
Fractional powers of the Laplacian $-\Delta$ are based on eigenfunction expansions of the Laplacian in $\Omega$ with homogeneous Dirichlet boundary conditions. Our main result is:
\begin{theo}\label{main}
Let $\tt_0\in L^2(\Omega)$. There exists a weak solution of \eqref{SQG},  $\tt\in L^\infty([0, \infty); L^2(\Omega))$ with initial data $\tt_0$. That is, for any $T\ge 0$ and $\phi\in C^\infty_0((0, T)\times \Omega)$, $\tt$ satisfies
\bq\label{weak}
\int_0^T\int_\Omega\tt(x, t)\partial_t\phi(x, t)dxdt+\int_0^T\int_\Omega \tt(x, t)u(x, t)\cdot\nabla \phi(x, t)dxdt=0.
\eq
Moreover, $\psi = \L^{-1}\theta\in C([0,\infty); H_0^{{1-\eps}}(\Omega))$ for any  $0<\eps \le 1$ and the initial data is attained
\bq\label{weak:data}
\tt(\cdot, 0)=\tt_0(\cdot)\quad\text{in}~H^{-\eps}(\Omega).
\eq
The Hamiltonian
\bq\label{Ha}
H:=\mez\int_{\Omega}\tt(x, t)\L^{-1}\tt(x, t)dx=\mez\int_{\Omega}\tt_0(x)\Lambda^{-1}\tt_0(x)dx
\eq
is constant in time and, moreover,  $\tt$ obeys the energy inequality
\bq\label{energyin}
\mez\Vert \tt(\cdot, t)\Vert^2_{L^2(\Omega)}\le \mez\Vert \tt_0\Vert^2_{L^2(\Omega)}\quad\text{\it a.e.}~t\ge 0.
\eq
\end{theo}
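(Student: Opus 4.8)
The plan is to construct $\theta$ as a limit of Galerkin approximations built from the Dirichlet eigenfunctions $\{w_k\}$ of $-\Delta$ on $\Omega$. Let $P_m$ denote the orthogonal projection in $L^2(\Omega)$ onto the span of $w_1,\dots,w_m$, and seek $\theta_m(t)=\sum_{k=1}^m c_k^m(t)w_k$ solving the projected equation $\partial_t\theta_m + P_m(u_m\cdot\nabla\theta_m)=0$, $u_m = R_D^\perp\theta_m = \nabla^\perp\Lambda^{-1}\theta_m$, with $\theta_m(0)=P_m\theta_0$. Since the $w_k$ are smooth up to $\pa\Omega$, each $u_m$ is smooth, the nonlinearity is locally Lipschitz in the coefficients, and a solution exists locally in time. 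Testing against $\theta_m$ and using that $u_m$ is divergence free and tangent to the boundary gives $\frac{d}{dt}\|\theta_m(t)\|_{L^2}^2 = 0$, so the $L^2$ norm is conserved, solutions are global, and $\|\theta_m(\cdot,t)\|_{L^2}=\|P_m\theta_0\|_{L^2}\le\|\theta_0\|_{L^2}$ uniformly in $m$ and $t$. Testing instead against $\psi_m=\Lambda^{-1}\theta_m$ and using the skew-symmetry of the nonlinear term against the stream function (the Hamiltonian structure of SQG) gives $\frac{d}{dt}H_m=0$ where $H_m=\frac12\int_\Omega\theta_m\Lambda^{-1}\theta_m$; this survives the limit because $\Lambda^{-1}=(-\Delta)^{-1/2}$ is a fixed compact self-adjoint operator on $L^2$.

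Next I would extract compactness. From the uniform $L^2$ bound, $\{\theta_m\}$ is bounded in $L^\infty([0,\infty);L^2(\Omega))$, hence has a weak-$*$ limit $\theta$ along a subsequence; moreover $\|\theta(\cdot,t)\|_{L^2}\le\|\theta_0\|_{L^2}$ for a.e. $t$, which is the energy inequality \eqref{energyin}. To pass to the limit in the nonlinear term I would work at the level of the stream function: $\psi_m=\Lambda^{-1}\theta_m$ is bounded in $L^\infty([0,\infty);H^1_0(\Omega))$, and $u_m = \nabla^\perp\psi_m$ is bounded in $L^\infty([0,\infty);L^2)$. The key is to bound $\pa_t\psi_m$ (equivalently $\pa_t\theta_m$ in a negative-order space): from the equation, $\pa_t\theta_m = -P_m\cnx(u_m\theta_m)$, and since $u_m\theta_m$ is bounded in $L^\infty_t L^1_x$ one gets $\pa_t\theta_m$ bounded in, say, $L^\infty([0,\infty);H^{-s}(\Omega))$ for $s$ large enough (using that $\|P_m\|$ is bounded on every $H^{-s}$ with $s$ such that $H^s\hookrightarrow L^\infty$, which holds for $s>d/2$). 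Then $\pa_t\psi_m = \Lambda^{-1}\pa_t\theta_m$ is bounded in $L^\infty_t H^{-s+1}$, and by Aubin--Lions (with $H^1_0\hookrightarrow\hookrightarrow H^{1-\eps}\hookrightarrow H^{-s+1}$) the family $\{\psi_m\}$ is precompact in $C([0,T];H^{1-\eps}(\Omega))$ for every $T$ and every $\eps\in(0,1]$. This yields $\psi=\Lambda^{-1}\theta\in C([0,\infty);H^{1-\eps}_0(\Omega))$, strong convergence $\psi_m\to\psi$ in $C_tH^{1-\eps}$, hence $u_m\to u$ strongly in $C_tH^{-\eps}$ and $\theta_m\to\theta$ strongly in $C_tH^{-1-\eps}$, and in particular the initial data is attained in $H^{-\eps}$, giving \eqref{weak:data}.

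The main obstacle — and the heart of the argument — is justifying the passage to the limit in the quadratic term $\int_0^T\!\!\int_\Omega \theta_m u_m\cdot\nabla\phi$. Weak-$*$ convergence of $\theta_m$ and weak convergence of $u_m$ in $L^2$ is not enough for a product. Here I would exploit the commutator structure of SQG: writing the nonlinear term tested against $\phi$ using $u_m=R_D^\perp\theta_m$ and the antisymmetry of $R_D^\perp$, one rewrites $\int \theta_m u_m\cdot\nabla\phi\,dx$ as a bilinear form in $\theta_m$ with a kernel that is the commutator of $R_D^\perp$ with multiplication by (derivatives of) $\phi$. The commutator estimates of \cite{ConIgn} show this bilinear form is continuous on $L^2\times L^2$ with a gain of derivative coming from the commutator, so that it is in fact weakly continuous: $\theta_m\rightharpoonup\theta$ in $L^2$ implies the bilinear expression converges. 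Concretely, I would decompose $R_D^\perp = R_D^{\perp,\text{free}} + (\text{boundary correction})$ using the Green's function comparison from \cite{ConIgn2}; the translation-invariant part is handled by Resnick's / \cite{CCW}'s commutator argument and the correction term, which is smoothing of order one away from the boundary and controlled by the additional commutator estimates built on \cite{ConIgn2}, contributes a term that is actually strongly continuous in $L^2_{\mathrm{loc}}$. Combining strong convergence of $\psi_m$ with this weak continuity lets one pass to the limit in \eqref{weak}, and the conservation of $H$ follows by passing to the limit in $H_m$ using that $\theta_m\rightharpoonup\theta$ in $L^2$ together with the compactness of $\Lambda^{-1}$, which upgrades weak convergence of $\theta_m$ to strong convergence of $\Lambda^{-1/2}\theta_m$ in $L^2$ so that $H_m=\frac12\|\Lambda^{-1/2}\theta_m\|_{L^2}^2\to\frac12\|\Lambda^{-1/2}\theta\|_{L^2}^2=H$; its time-independence passes to the limit pointwise in $t$ via the $C_tH^{1-\eps}$ convergence of $\psi_m$.
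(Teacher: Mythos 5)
Your overall architecture coincides with the paper's: Galerkin approximation on the span of the Dirichlet eigenfunctions, conservation of the $L^2$ norm and of $H_m$ at the approximate level, a bound on $\pa_t\psi_m$ in a negative Sobolev space leading to Aubin--Lions compactness of $\psi_m$ in $C([0,T];H_0^{1-\eps})$, and a commutator argument to pass to the limit in the nonlinearity. The conservation-of-$H$ and energy-inequality arguments are fine. The gap is in the one step you yourself identify as the heart of the matter. You assert that the commutator estimates of \cite{ConIgn} already make the bilinear form $\theta\mapsto\int\theta\,R_D^\perp\theta\cdot\nabla\phi$ weakly continuous on $L^2\times L^2$, and propose to realize this by splitting $R_D^\perp$ into a translation-invariant part plus a boundary correction that is ``smoothing of order one away from the boundary.'' Neither claim is established. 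The paper instead proves an exact identity (Lemma \ref{commu:key}): for $\psi\in H_0^1$, $\theta=\L\psi$, $u=\na^\perp\psi$,
\[
\int_\Omega \tt u\cdot \nabla \phi\, dx=\mez \int_\Omega  [\Lambda, \nabla^\perp]\psi\cdot \nabla\phi\,\psi\, dx-\mez\int_\Omega  \nabla^\perp\psi\cdot [\Lambda, \nabla\phi] \psi\, dx,
\]
obtained by commuting $\na^\perp$ past $\L$ and then $\L$ past $\na\phi$ and absorbing the resulting copy of the left-hand side. The second term is indeed handled by Theorem \ref{Commutator:CI} from \cite{ConIgn}. But the first term, $[\L,\na^\perp]$, is exactly the obstruction created by the loss of translation invariance, and it is \emph{not} covered by \cite{ConIgn}: it requires the new pointwise estimate of Theorem \ref{Commutator:CN}, $|[\L^s,\na]\psi(x)|\le C\,d(x)^{-s-1-d/p}\|\psi\|_{L^p}$, proved in the Appendix via the Li--Yau heat-kernel bounds and the cancellation $(\na_x+\na_y)H$ near the diagonal. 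The $L^p$ (here $p=2$) right-hand side is essential, because the stream function $\psi\in H_0^1$ is not known to be bounded, so the $p=\infty$ version from \cite{ConIgn2} does not apply. Your sketch names the right references but does not supply this estimate, and without it the ``boundary correction is smoothing'' claim is unsupported.

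Two smaller points. First, for the $\pa_t\theta_m$ bound you need $\na\P_m\phi$ (not just $\P_m\phi$) uniformly bounded in $L^\infty$, so the threshold is $s>1+d/2$ rather than $s>d/2$; the paper in fact needs $\P_m\phi$ uniformly in a H\"older class vanishing on $\pa\Omega$ so that $R_D$ maps it to $L^\infty$, which is why it takes $r>3d/2$ and uses the eigenfunction-coefficient decay \eqref{phij}. Second, weak continuity of a spatial bilinear form under $\theta_m\rightharpoonup\theta$ does not by itself control the time integral of a product of two weakly convergent sequences; you do invoke the strong $C_tH^{1-\eps}$ convergence of $\psi_m$ at the end, which is the correct fix and is exactly how the paper splits $I_m$ into four terms, pairing each commutator against a strongly convergent factor or testing a weakly convergent factor against a fixed $L^2$ function.
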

\begin{rema}
The weak formulation \eqref{weak} means that the SQG equation is satisfied in the sense of distributions. In fact, because
of the boundedness of $R_D$ in $L^2(\Omega)$, the product $\theta u$ is a function,  $\theta u \in L^{\infty}([0,T]; L^1(\Omega))$. The Hamiltonian $H$
is well-defined for allmost all $t\ge 0$ because $\L^{-1}\theta\in H_0^1(\Omega)$. The linear map $\L: H_0^{1-\eps}(\Omega)\to H^{-\eps}(\Omega)$ is continuous, and so $\theta\in C([0,\infty); H^{-\eps}(\Omega))$.
\end{rema}
\section{Preliminaries}
Let $\Omega$ be an open bounded set of $\Rr^d$, $d\ge 2$, with smooth boundary. The Laplacian $-\Delta$ is defined on $D(-\Delta)=H^2(\Omega)\cap H^1_0(\Omega)$. Let $\{w_j\}_{j=1}^\infty$ be an orthonormal basis of $L^2(\Omega)$ comprised of $L^2-$normalized eigenfunctions $w_j$ of $-\Delta$, {\it {\it i.e.}}
\[
-\Delta w_j=\lambda_jw_j, \quad \int_{\Omega}w_j^2dx = 1,
\]
with $0<\lambda_1<\lambda_2\le...\le\lambda_j\to \infty$.\\
The fractional Laplacian is defined using eigenfunction expansions,
\[
\Lambda^{s}f\equiv (-\Delta)^{\frac{s}{2}} f:=\sum_{j=1}^\infty\lambda_j^{\frac{s}{2}} f_j w_j\quad\text{with}~f=\sum_{j=1}^\infty f_jw_j,\quad f_j=\int_{\Omega} fw_jdx
\]
for $s\in [0, 2]$ and $f\in {\mathcal {D}}(\Lambda^{s}):=\{f\in L^2(\Omega): \big(\lambda_j^{\frac{s}{2}} f_j\big)\in \ell^2(\mathbb N)\}$. The norm of $f$ in ${\mathcal{D}}(\Lambda^{s })$ is defined by
\[
\Vert f\Vert_{s, D}:=\big(\sum_{j=1}^\infty\lambda_j^s f_j^2\big)^\mez.
\]
It is also well-known that ${\mathcal{D}}(\Lambda)$ and $H^1_0(\Omega)$ are isometric. 
In the language of interpolation theory, 
\[
{\mathcal{D}}(\Lambda^\alpha)=[L^2(\Omega), {\mathcal{D}}(-\Delta)]_{\frac \alpha 2}\quad\forall \alpha\in [0, 2].
\]
As mentioned above,
\[
H^1_0(\Omega)= {\mathcal{D}}(\Lambda)=[L^2(\Omega), {\mathcal{D}}(-\Delta)]_{\mez},
\]
hence
\[
{\mathcal{D}}(\Lambda^\alpha)=[L^2(\Omega), H^1_0(\Omega)]_{\alpha}\quad\forall \alpha\in [0, 1].
\]
Consequently, we can identify ${\mathcal{D}}(\Lambda^\alpha)$ with usual Sobolev spaces (see Chapter 1 \cite{LioMag}):
\bq\label{identify}
{\mathcal{D}}(\Lambda^\alpha)=
\begin{cases}
H^\alpha_0(\Omega) &\quad\text{if}~ \alpha\in [0, 1]\setminus\{\mez\},\\
H^\mez_{00}(\Omega):=\{ u\in H^\mez_0(\Omega): u/\sqrt{d(x)}\in L^2(\Omega)\}&\quad\text{if}~ \alpha=\mez.
\end{cases}
\eq
Here and below $d(x)$ is the distance to the boundary of the domain:
\be
d(x) = d(x, \pa\Omega).
\label{dx}
\ee
The following estimate for the commutator of $\Lambda$ with multiplication by a function was proved in \cite{ConIgn} using the method of harmonic extension:
\begin{theo}[\protect{Theorem 2, \cite{ConIgn}}]\label{Commutator:CI}
Let $\chi\in B(\Omega)$ with $B(\Omega)=W^{2, \infty}(\Omega)\cap W^{1, \infty}(\Omega)$ if $d\ge 3$, and $B(\Omega)=W^{2, p}(\Omega)$ with $p>2$ if $d=2$. There exists a constant $C(d, p, \Omega)$ such that
\[
\Vert [\Lambda, \chi]\psi\Vert_{\mez, D}\le C(d, p, \Omega)\Vert \chi\Vert_{B(\Omega)}\Vert \psi\Vert_{\mez, D}.
\] 
\end{theo}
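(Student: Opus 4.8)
The plan is to prove this by the method of harmonic extension, working on the half-cylinder $\Omega\times(0,\infty)$. For $f=\sum_j f_jw_j\in L^2(\Omega)$ let $f^*(x,z):=\sum_j f_j e^{-\sqrt{\lambda_j}\,z}w_j(x)$ be its harmonic extension: it solves $\Delta_{x,z}f^*=0$ in $\Omega\times(0,\infty)$, vanishes on $\partial\Omega\times(0,\infty)$, decays as $z\to\infty$, equals $f$ on $\{z=0\}$, and $\Lambda f=-\partial_z f^*|_{z=0}$. Since $\chi=\chi(x)$ does not depend on $z$, one has $\chi\Lambda\psi=-\partial_z(\chi\psi^*)|_{z=0}$ and $\Lambda(\chi\psi)=-\partial_z(\chi\psi)^*|_{z=0}$, so setting
\[
v:=\chi\psi^*-(\chi\psi)^*
\]
gives the identity $[\Lambda,\chi]\psi=\partial_z v|_{z=0}$. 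A one-line computation shows $\Delta_{x,z}v=(\Delta\chi)\psi^*+2\nabla\chi\cdot\nabla_x\psi^*=:F$ in $\Omega\times(0,\infty)$, while $v=0$ on $\{z=0\}$ and on $\partial\Omega\times(0,\infty)$, with decay at infinity. (For $\psi\in\mathcal D(\Lambda)$ everything here is classical; the general case $\psi\in\mathcal D(\Lambda^{\mez})$ follows by density once the estimate below is established, since it extends $[\Lambda,\chi]$ to a bounded operator on $\mathcal D(\Lambda^{\mez})$.)

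Next I would resolve $v$ and $F$ in the eigenbasis, $v=\sum_j v_j(z)w_j$, $F=\sum_j F_j(z)w_j$. The coefficients solve $v_j''-\lambda_j v_j=F_j$, $v_j(0)=0$, $v_j(z)\to0$, and the half-line Green's function gives $\partial_z v_j(0)=-\int_0^\infty e^{-\sqrt{\lambda_j}\,s}F_j(s)\,ds$, which is the $j$-th coefficient of $[\Lambda,\chi]\psi$. By Cauchy--Schwarz, $\big(\partial_z v_j(0)\big)^2\le \lambda_j^{-\mez}\int_0^\infty e^{-\sqrt{\lambda_j}\,s}F_j(s)^2\,ds$ (using $\int_0^\infty e^{-\sqrt{\lambda_j}s}\,ds=\lambda_j^{-\mez}$), hence
\[
\Vert[\Lambda,\chi]\psi\Vert_{\mez,D}^2=\sum_j\lambda_j^{\mez}\big(\partial_z v_j(0)\big)^2\le\sum_j\int_0^\infty e^{-\sqrt{\lambda_j}\,s}F_j(s)^2\,ds\le\int_0^\infty\!\!\int_\Omega|F(x,z)|^2\,dx\,dz,
\]
using $e^{-\sqrt{\lambda_j}s}\le1$ at the last step. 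Thus the whole theorem reduces to proving $\Vert F\Vert_{L^2(\Omega\times(0,\infty))}\le C\Vert\chi\Vert_{B(\Omega)}\Vert\psi\Vert_{\mez,D}$.

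For the gradient term this is immediate: $\Vert 2\nabla\chi\cdot\nabla_x\psi^*\Vert_{L^2(\Omega\times(0,\infty))}\le 2\Vert\nabla\chi\Vert_{L^\infty}\Vert\nabla_x\psi^*\Vert_{L^2(\Omega\times(0,\infty))}$, and, using $\int_\Omega\nabla w_j\cdot\nabla w_k=\lambda_j\delta_{jk}$ and $\int_0^\infty e^{-2\sqrt{\lambda_j}z}\,dz=\mez\lambda_j^{-\mez}$, one gets $\Vert\nabla_x\psi^*\Vert_{L^2(\Omega\times(0,\infty))}^2=\mez\sum_j\lambda_j^{\mez}\psi_j^2=\mez\Vert\psi\Vert_{\mez,D}^2$. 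The term $(\Delta\chi)\psi^*$ is where the dimensional split enters. If $d\ge3$, simply $\Vert(\Delta\chi)\psi^*\Vert_{L^2}\le\Vert\Delta\chi\Vert_{L^\infty}\Vert\psi^*\Vert_{L^2(\Omega\times(0,\infty))}$ with $\Vert\psi^*\Vert_{L^2(\Omega\times(0,\infty))}^2=\mez\sum_j\lambda_j^{-\mez}\psi_j^2\le\tfrac{1}{2\lambda_1}\Vert\psi\Vert_{\mez,D}^2$. If $d=2$ and only $\Delta\chi\in L^p$ with $p>2$ is available, I would use H\"older in $x$, $\int_\Omega|\Delta\chi|^2|\psi^*|^2\,dx\le\Vert\Delta\chi\Vert_{L^p}^2\Vert\psi^*(\cdot,z)\Vert_{L^{2p/(p-2)}(\Omega)}^2$, then the Sobolev embedding $\mathcal D(\Lambda^{2/p})\hookrightarrow L^{2p/(p-2)}(\Omega)$ (valid since $2/p=1-2\cdot\tfrac{p-2}{2p}$ and $0<2/p<1$), and finally integrate in $z$: $\int_0^\infty\Vert\psi^*(\cdot,z)\Vert_{\mathcal D(\Lambda^{2/p})}^2\,dz=\mez\sum_j\lambda_j^{2/p-\mez}\psi_j^2\le C\Vert\psi\Vert_{\mez,D}^2$, the last inequality holding precisely because $p\ge2$. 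Since moreover $W^{2,p}(\Omega)\hookrightarrow W^{1,\infty}(\Omega)$ for $p>2=d$, this also controls $\Vert\nabla\chi\Vert_{L^\infty}$, and the proof closes.

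The main obstacle is the two-dimensional case: one cannot afford $\Vert\Delta\chi\Vert_{L^\infty}$, so one must route the estimate through the Sobolev exponent $2/p$, and it is a genuine (if narrow) coincidence that the power $\lambda_j^{2/p-\mez}$ produced after integrating the Poisson factor in $z$ is dominated by $\lambda_j^{\mez}$ exactly when $p\ge2$, matching the target space $\mathcal D(\Lambda^{\mez})$ with no room to spare. Apart from this, the remaining points are routine but not vacuous: justifying the extension identity $\Lambda f=-\partial_z f^*|_{z=0}$ and the boundary conditions for $v$ up to $\partial\Omega$ (here smoothness of $\partial\Omega$ and elliptic regularity are used to make sense of the trace $\partial_z v|_{z=0}$), the termwise manipulation of the eigenfunction expansions, and the density argument reducing general $\psi\in\mathcal D(\Lambda^{\mez})$ to $\psi\in\mathcal D(\Lambda)$.
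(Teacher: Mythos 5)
Your argument is correct and follows exactly the approach the paper attributes to its source: Theorem \ref{Commutator:CI} is quoted from \cite{ConIgn} without proof, with the remark that it "was proved \dots using the method of harmonic extension," which is precisely your reduction of $[\Lambda,\chi]\psi=\partial_z v|_{z=0}$ to the $L^2(\Omega\times(0,\infty))$ bound on $F=(\Delta\chi)\psi^*+2\nabla\chi\cdot\nabla_x\psi^*$ via the half-line Green's function. The computations (the Cauchy--Schwarz step giving $\sum_j\lambda_j^{1/2}(v_j'(0))^2\le\Vert F\Vert_{L^2}^2$, and the $d=2$ detour through $\mathcal D(\Lambda^{2/p})\hookrightarrow L^{2p/(p-2)}$) all check out.
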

We also need a pointwise estimate for the commutator of the fractional Laplacian with differentiation.
\begin{theo}\label{Commutator:CN}
For any $p\in [1, \infty]$ and $s\in (0, 2)$ there exists a positive constant $C(d, s, p, \Omega)$ such that
\[
\la [\Lambda^s, \nabla]\psi(x)\ra\le C(d, s, p, \Omega)d(x)^{-s-1-\frac dp}\Vert \psi\Vert_{L^p(\Omega)}
\]
holds for all $x\in \Omega$.
\end{theo}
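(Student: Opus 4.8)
The plan is to use the Dirichlet heat kernel representation of the fractional Laplacian together with Gaussian bounds. Write, for $s\in(0,2)$,
\[
\Lambda^s\psi(x) = c_s\int_0^\infty t^{-1-\frac s2}\big(e^{t\Delta}\psi(x)-\psi(x)\big)\,dt,
\]
where $e^{t\Delta}$ is the Dirichlet heat semigroup on $\Omega$ with kernel $H_\Omega(t,x,y)$, and $c_s>0$ is the usual normalizing constant. Applying $\nabla_x$ to the semigroup part and subtracting the same expression with $\nabla$ acting first gives
\[
[\Lambda^s,\nabla]\psi(x) = c_s\int_0^\infty t^{-1-\frac s2}\int_\Omega \big(\nabla_x H_\Omega(t,x,y) + \nabla_y H_\Omega(t,x,y)\big)\psi(y)\,dy\,dt,
\]
after integrating by parts in $y$ in the $\nabla\psi$ term (the boundary term vanishes since $H_\Omega(t,x,\cdot)$ vanishes on $\pa\Omega$). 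The key point is that the dangerous pieces $\nabla_x H_\Omega$ and $\nabla_y H_\Omega$, each of which is only $O(t^{-1/2})$ times the Gaussian in the interior, largely cancel: the combination $(\nabla_x+\nabla_y)H_\Omega$ is precisely where the lack of translation invariance lives, and it is controlled by the geometry of $\Omega$ near $\pa\Omega$.

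The main technical input will be the following bounds for the Dirichlet heat kernel on a smooth bounded domain, which are classical (Davies, Q.S.~Zhang): for $x,y\in\Omega$ and $t>0$,
\[
H_\Omega(t,x,y)\le C t^{-d/2}\min\!\Big(1,\frac{d(x)}{\sqrt t}\Big)\min\!\Big(1,\frac{d(y)}{\sqrt t}\Big)\,e^{-c|x-y|^2/t},
\]
together with the gradient estimate $|\nabla_x H_\Omega(t,x,y)|\le C t^{-1/2}\,t^{-d/2}e^{-c|x-y|^2/t}$ and, crucially, the cancellation estimate
\[
\big|(\nabla_x+\nabla_y)H_\Omega(t,x,y)\big|\le C\,t^{-d/2}\,e^{-c|x-y|^2/t}\Big(\tfrac{1}{d(x)}\wedge \tfrac{1}{\sqrt t}\Big),
\]
reflecting that on $\mathbb{R}^d$ the sum would vanish identically and that the correction is governed by curvature of $\pa\Omega$ at scale $d(x)$. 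Granting this, I would split the $t$-integral at $t = d(x)^2$. For $t\le d(x)^2$ one uses $|(\nabla_x+\nabla_y)H_\Omega|\le C t^{-1/2-d/2}e^{-c|x-y|^2/t}$, integrate $|\psi(y)|$ against it using Hölder with exponent $p$ (the Gaussian in $L^{p'}_y$ contributes $t^{d/(2p')}$), and then $\int_0^{d(x)^2} t^{-1-s/2-1/2 - d/2 + d/(2p')}dt \sim d(x)^{-s-1-d/p}$ since the exponent of $t$ is $> -1$ for $s<2$... wait, one needs $-1-s/2-1/2-d/(2p) > -1$ near $t=0$, i.e. this is an integral that converges at $0$ only after the cancellation lowers the power, so here the bound $d(x)^{-1}$ from the cancellation estimate is what makes $\int_0^{d(x)^2} t^{-1-s/2}\cdot t^{-d/2+d/(2p')}\,dt\cdot d(x)^{-1}$ finite and of the claimed size. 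For $t\ge d(x)^2$ one uses the plain bound $|(\nabla_x+\nabla_y)H_\Omega|\le C t^{-1/2-d/2}e^{-c|x-y|^2/t}$ (or even just $|\nabla_x H_\Omega|+|\nabla_y H_\Omega|$), Hölder again, and $\int_{d(x)^2}^\infty t^{-1-s/2-1/2-d/(2p)}dt\sim d(x)^{-s-1-d/p}$, which now converges at $\infty$ because $s>0$. Adding the two ranges yields the stated pointwise bound with a constant depending only on $d,s,p$ and the heat-kernel constants for $\Omega$.

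The main obstacle is establishing (or locating a clean reference for) the cancellation estimate on $(\nabla_x+\nabla_y)H_\Omega$ with the factor $d(x)^{-1}\wedge t^{-1/2}$; this is the analogue of the $C^{1,\alpha}$-type boundary regularity of the heat kernel exploited in \cite{ConIgn2}, and is exactly the step that encodes the ill effects of the absence of translation invariance. An alternative, and probably the route actually taken, is to avoid the sum $\nabla_x+\nabla_y$ altogether: integrate by parts to move $\nabla_y$ off $\psi$ but keep track only of $\nabla_x H_\Omega$ and $\nabla_y H_\Omega$ separately, each bounded by $C t^{-1/2} t^{-d/2} e^{-c|x-y|^2/t}$, which already gives a finite answer once one also inserts the extra decay factor $\min(1, d(x)/\sqrt t)$ from the refined kernel bound — splitting at $t=d(x)^2$ as above, the near-diagonal part converges because for $t\le d(x)^2$ we may bound $\min(1,d(x)/\sqrt t)=1$ and use $s<2$ is \emph{not} enough, so the honest cancellation is genuinely needed and should be proved directly from the method of \cite{ConIgn2} by differentiating the harmonic (or caloric) extension and using barrier/maximum-principle arguments near $\pa\Omega$. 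Either way, once the kernel estimate is in hand the remaining steps are the routine scaling computations sketched above.
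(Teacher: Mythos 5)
Your proposal has the right skeleton --- the heat--kernel representation of $\Lambda^s$, the reduction of the commutator to the kernel $(\nabla_x+\nabla_y)H(x,y,t)$, H\"older in $y$, and a splitting of the time integral at $t=d(x)^2$ --- and you correctly identify that the whole theorem hinges on a cancellation estimate for $(\nabla_x+\nabla_y)H$ expressing the failure of translation invariance. But the quantitative form of the cancellation you propose is too weak, and the computation you give to close the argument with it is wrong. With your bound
\[
\left|(\nabla_x+\nabla_y)H(x,y,t)\right|\le C\,t^{-\frac d2}e^{-c\frac{|x-y|^2}{t}}\Big(\frac{1}{d(x)}\wedge\frac{1}{\sqrt t}\Big),
\]
the short-time contribution becomes, after H\"older,
\[
\frac{1}{d(x)}\int_0^{d(x)^2}t^{-1-\frac s2-\frac{d}{2p}}\,dt ,
\]
and this integral diverges at $t=0$ for every $s>0$ (the exponent is $\le -1$, strictly so once $s>0$). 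A polynomial gain of $\sqrt t/d(x)$ over the naive gradient bound cannot repair a divergence of order $t^{-1-s/2}$; your assertion that this expression is ``finite and of the claimed size'' is false. What is actually needed --- and what the paper imports from Appendix 1 of \cite{ConIgn2} --- is that the cancellation is \emph{exponential} in $t$ for short times away from the boundary:
\[
\left|(\nabla_x+\nabla_y)H(x,y,t)\right|\le C\,t^{-\frac12-\frac d2}e^{-\frac{d(x)^2}{Ct}},\qquad t\le d(x)^2,\ |x-y|\le \tfrac{d(x)}{10},
\]
because for such $t$ and $y$ the Dirichlet kernel agrees with the translation-invariant whole-space kernel (for which the sum vanishes identically) up to errors of size $e^{-cd(x)^2/t}$. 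It is the factor $e^{-d(x)^2/(Ct)}$, fed into the elementary identity $\int_0^\infty t^{-1-\frac m2}e^{-p^2/(Kt)}dt\le C_{K,m}\,p^{-m}$, that both removes the divergence at $t=0$ and produces the power $d(x)^{-s-1-\frac dp}$.

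A secondary gap: this cancellation bound is local, valid only for $|x-y|\lesssim d(x)$, so a spatial splitting is required in addition to the temporal one. Your argument integrates over all $y\in\Omega$ in the short-time regime, where for $y$ far from $x$ (in particular $y$ near $\partial\Omega$) there is no reason for $(\nabla_x+\nabla_y)H$ to be small. The paper splits $\Omega$ into $|x-y|\ge d(x)/10$ and $|x-y|\le d(x)/10$; in the far region no cancellation is used at all --- one simply writes $e^{-|x-y|^2/(Kt)}\le e^{-d(x)^2/(200Kt)}e^{-|x-y|^2/(2Kt)}$ so that the Gaussian itself supplies the exponential smallness in $t$, and the plain gradient bound $|\nabla_{x}H|+|\nabla_y H|\le Ct^{-\frac12-\frac d2}e^{-|x-y|^2/(Kt)}$ suffices over the full range $t\in(0,\infty)$. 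With these two corrections the rest of your scaling computations go through as you describe.
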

The proof follows closely the proof for the $p=\infty$ case which was done in \cite{ConIgn2} (see Lemma 6 there) using the heat kernel representation of the fractional Laplacian together with a cancelation of the heat kernel of  $\Rr^d$. We  apply this theorem to the stream function $\psi=\Lambda^{-1}\tt$ which is in $H^1_0(\Omega)$ and thus not necessarily in $L^\infty (\Omega)$. The proof of Theorem \ref{Commutator:CN} is provided in the Appendix.
\section{Proof of Theorem \ref{main}}
Let $\Omega\subset\Rr^d$ be an open bounded set with smooth boundary. Denote by $\P_m$ the projection in $L^2$ onto the linear span $L^2_m$ of eigenfunctions $\{w_1,...,w_m\}$, {\it {\it i.e.}}
\[
\P_m f=\sum_{j=1}^mf_jw_j\quad\text{for}~f=\sum_{j=1}^\infty f_jw_j.
 \]
Let $\phi\in C_0^{\infty}(\Omega)$ and let $\phi_j = \int_{\Omega}\phi(x) w_j(x)dx$ be the eigenfunction expansion coefficients of $\phi$. Let us note that
\be
|\phi_j| \le C_N\lambda_j^{-N}
\label{phij}
\ee
holds with $C_N$ depending only on  $\phi$ and $N\ge 0$, for $j\ge 1$,
\be
C_N = \Vert\D^N\phi\Vert_{L^2(\Omega)}.
\label{cn}
\ee
This follows from repeated integration by parts using $-\D w_j = \lambda_j w_j$ and Schwartz inequalities. By elliptic regularity estimates, we obtain for all $k\in \mathbb N$ that
\[
\Vert w_j\Vert_{H^k(\Omega)}\le C_k\lambda_j^{\frac k2}.
\]
We know from the easy part of Weyl's asymptotic law that $\lambda_j \ge Cj^{\frac {2}{d}}$. Therefore, with sufficiently large  $N$ satisfying $\frac 2d(N-\fr{k}{2}) > 1$ we deduce that
\[
\Vert (\I - \P_m)\phi\Vert_{H^k(\Omega)}\le \sum_{j=m+1}^\infty |\phi_j|\Vert w_j\Vert_{H^k(\Omega)}\le C_{k, N}\sum_{j=m+1}^\infty \lambda_j^{\frac{k}{2}-N}\to 0
\]
as $m\to \infty$. We proved therefore:
\begin{lemm}\label{idap} Let $\phi\in C_0^{\infty}(\Omega)$. For all $k\in \mathbb N$ we have
\bq\label{error:Pm}
\lim_{m\to\infty}\Vert\left(\mathbb I - \P_m\right)\phi\Vert_{H^k(\Omega)} = 0.
\eq
\end{lemm}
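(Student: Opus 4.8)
The proof of Lemma~\ref{idap} is essentially contained in the paragraph that precedes it, so the plan is simply to assemble the three ingredients already displayed into a clean argument. The object to control is $(\I-\P_m)\phi=\sum_{j=m+1}^\infty\phi_j w_j$, and since the $w_j$ are orthogonal in $L^2$ but \emph{not} in $H^k(\Omega)$ for $k\ge 1$, I would not try to exploit orthogonality; instead I would use the crude triangle inequality $\Vert(\I-\P_m)\phi\Vert_{H^k(\Omega)}\le\sum_{j=m+1}^\infty|\phi_j|\,\Vert w_j\Vert_{H^k(\Omega)}$, which is the bound already written down. The three facts I would invoke, all stated above, are: (i) the rapid decay $|\phi_j|\le C_N\lambda_j^{-N}$ with $C_N=\Vert\D^N\phi\Vert_{L^2(\Omega)}$, valid for every $N\ge 0$ since $\phi\in C_0^\infty(\Omega)$; (ii) the elliptic-regularity growth estimate $\Vert w_j\Vert_{H^k(\Omega)}\le C_k\lambda_j^{k/2}$; and (iii) the lower Weyl bound $\lambda_j\ge C j^{2/d}$.

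First I would fix $k\in\mathbb N$. Then I would choose $N$ large enough that $\frac2d\bigl(N-\tfrac k2\bigr)>1$, i.e.\ $N>\tfrac k2+\tfrac d2$; any such integer $N$ works, and with that choice $C_N$ is a finite constant depending only on $\phi$ and $k$. Combining (i) and (ii) gives the termwise bound
\[
|\phi_j|\,\Vert w_j\Vert_{H^k(\Omega)}\le C_{k,N}\,\lambda_j^{\frac k2-N},
\]
and then (iii) yields $\lambda_j^{\frac k2-N}\le C\,j^{\frac2d(\frac k2-N)}$ with exponent $\frac2d(\frac k2-N)<-1$, so the tail $\sum_{j=m+1}^\infty\lambda_j^{\frac k2-N}$ is the tail of a convergent $p$-series. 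Hence
\[
\Vert(\I-\P_m)\phi\Vert_{H^k(\Omega)}\le C_{k,N}\sum_{j=m+1}^\infty\lambda_j^{\frac k2-N}\xrightarrow[m\to\infty]{}0,
\]
which is exactly \eqref{error:Pm}.

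There is no real obstacle here: every quantitative input has already been established in the preceding discussion, and the only point requiring a moment's care is the bookkeeping on the exponent --- making sure the chosen $N$ depends only on $k$ (and $d$), so that the constant $C_{k,N}$ in the final estimate is independent of $m$ and the tail genuinely vanishes. One could alternatively phrase the last step via dominated convergence on $\mathbb N$ with counting measure, but the $p$-series tail argument is the shortest. I would therefore keep the write-up to a few lines, simply citing \eqref{phij}--\eqref{cn}, the elliptic estimate $\Vert w_j\Vert_{H^k}\le C_k\lambda_j^{k/2}$, and Weyl's lower bound, and then concluding.
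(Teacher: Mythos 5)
Your proof is correct and follows exactly the paper's argument: the triangle inequality $\Vert(\I-\P_m)\phi\Vert_{H^k}\le\sum_{j>m}|\phi_j|\Vert w_j\Vert_{H^k}$ combined with the decay \eqref{phij}, the elliptic estimate $\Vert w_j\Vert_{H^k}\le C_k\lambda_j^{k/2}$, and Weyl's lower bound to make the tail a convergent series. No differences worth noting.
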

Next, we adapt the well-known commutator representation of the nonlinearity in SQG (\cite{Res}, see also \cite{CCW}, \cite{cccgw}) to take into account the lack of translation invariance of $\L$:
\begin{lemm} \label{commu:key}
Let $\psi \in H^1_0(\Omega)$,  $u = \na^{\perp}\psi$ and $\theta = \L\psi$. Let $\phi\in C_0^{\infty}(\Omega)$ be a test function.   Then 
\bq\label{key}
\int_\Omega \tt u\cdot \nabla \phi dx=\mez \int_\Omega  [\Lambda, \nabla^\perp]\psi\cdot \nabla\phi\psi dx-\mez\int_\Omega  \nabla^\perp\psi\cdot [\Lambda, \nabla\phi] \psi dx
\eq
holds.
\end{lemm}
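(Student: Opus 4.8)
\textit{Plan of proof.} The strategy is to establish \eqref{key} first for $\psi\in C_0^\infty(\Omega)$, where every quantity appearing in it is a smooth function and all manipulations are unambiguous, and then to pass to a general $\psi\in H^1_0(\Omega)$ by density, using that $C_0^\infty(\Omega)$ is dense in $H^1_0(\Omega)=D(\Lambda)$ and that all three integrals in \eqref{key} depend continuously on $\psi\in H^1_0(\Omega)$. First I would record that each integral in \eqref{key} is well defined for $\psi\in H^1_0(\Omega)$: on the left $\theta=\Lambda\psi\in L^2(\Omega)$ and $u=\nabla^\perp\psi\in L^2(\Omega)$, so $\theta u\cdot\nabla\phi\in L^1(\Omega)$; in the last term $\psi\nabla\phi\in H^1_0(\Omega)$, hence $[\Lambda,\nabla\phi]\psi=\Lambda(\psi\nabla\phi)-\nabla\phi\,\Lambda\psi\in L^2(\Omega)$ pairs with $\nabla^\perp\psi\in L^2(\Omega)$; and in the first term on the right Theorem \ref{Commutator:CN} (with $s=1$, $p=2$) shows that $[\Lambda,\nabla^\perp]\psi$ is a function obeying $|[\Lambda,\nabla^\perp]\psi(x)|\le C\,d(x)^{-2-\frac d2}\Vert\psi\Vert_{L^2(\Omega)}$, which is bounded on $\supp\phi\Subset\Omega$ and so pairs with $\nabla\phi\,\psi\in L^1(\Omega)$.

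Next I would verify the continuity of each of the three terms in the $H^1_0(\Omega)$ topology. If $\psi_n\to\psi$ in $H^1_0(\Omega)$ then $\Lambda\psi_n\to\Lambda\psi$ and $\nabla^\perp\psi_n\to\nabla^\perp\psi$ in $L^2(\Omega)$, which gives convergence of the left side; since multiplication by $\nabla\phi$ is bounded on $H^1_0(\Omega)$ one gets $[\Lambda,\nabla\phi]\psi_n\to[\Lambda,\nabla\phi]\psi$ in $L^2(\Omega)$, handling the last term; and for the first term on the right the linearity of the commutator together with the pointwise estimate of Theorem \ref{Commutator:CN} give $\Vert[\Lambda,\nabla^\perp](\psi_n-\psi)\Vert_{L^\infty(\supp\phi)}\le C_\phi\Vert\psi_n-\psi\Vert_{L^2(\Omega)}\to0$, which with $\psi_n\to\psi$ in $L^2(\Omega)$ yields its convergence. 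Hence it suffices to prove \eqref{key} for $\psi\in C_0^\infty(\Omega)$.

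For such $\psi$ the identity is purely algebraic: here $u=\nabla^\perp\psi\in C_0^\infty(\Omega)$, $\theta=\Lambda\psi\in\bigcap_k D(\Lambda^k)\subset C^\infty(\overline\Omega)$, and $\psi\nabla\phi\in C_0^\infty(\Omega)\subset D(\Lambda)$. Expanding $[\Lambda,\nabla^\perp]\psi=\Lambda\nabla^\perp\psi-\nabla^\perp\Lambda\psi$ and $[\Lambda,\nabla\phi]\psi=\Lambda(\psi\nabla\phi)-\nabla\phi\,\Lambda\psi$, the right side of \eqref{key} equals
\begin{multline*}
\mez\int_\Omega\Lambda\nabla^\perp\psi\cdot(\psi\nabla\phi)\,dx-\mez\int_\Omega\nabla^\perp\Lambda\psi\cdot(\psi\nabla\phi)\,dx\\
-\mez\int_\Omega\nabla^\perp\psi\cdot\Lambda(\psi\nabla\phi)\,dx+\mez\int_\Omega\Lambda\psi\,\nabla^\perp\psi\cdot\nabla\phi\,dx\,.
\end{multline*}
The first and third integrals cancel, componentwise, by self-adjointness of $\Lambda$ on $L^2(\Omega)$. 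The fourth integral is exactly $\int_\Omega\theta u\cdot\nabla\phi\,dx$, the left side of \eqref{key}. For the second integral I would integrate by parts: there are no boundary terms since $\psi\nabla\phi$ is compactly supported in $\Omega$, the second-order terms $\psi\,\partial_i\partial_j\phi$ cancel, and one is left with $\int_\Omega\nabla^\perp\Lambda\psi\cdot(\psi\nabla\phi)\,dx=-\int_\Omega\Lambda\psi\,\nabla^\perp\psi\cdot\nabla\phi\,dx$, i.e.\ minus the left side. Adding up, the right side of \eqref{key} equals $\mez(\mathrm{LHS})+\mez(\mathrm{LHS})=\mathrm{LHS}$, which proves \eqref{key} for $\psi\in C_0^\infty(\Omega)$ and hence, by the preceding paragraph, for all $\psi\in H^1_0(\Omega)$.

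The main point --- and the reason the algebra cannot be carried out directly for $\psi\in H^1_0(\Omega)$ --- is that for such $\psi$ the stream function need not belong to $L^\infty(\Omega)$, and the individual terms $\Lambda\nabla^\perp\psi$ and $\nabla^\perp\Lambda\psi$ make sense only as distributions (e.g., in $H^{-1}(\Omega)$), not as functions; only their difference $[\Lambda,\nabla^\perp]\psi$ is a genuine, locally bounded function, which is precisely the content of Theorem \ref{Commutator:CN} and the reason that theorem is invoked here. Thus the density argument is unavoidable, and its essential ingredient is the quantitative boundary bound of Theorem \ref{Commutator:CN}, which supplies exactly the control on the first term on the right of \eqref{key} needed to pass to the limit. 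The remaining facts used --- density of $C_0^\infty(\Omega)$ in $H^1_0(\Omega)$, boundedness of multiplication by $\nabla\phi$ on $H^1_0(\Omega)$, the inclusion $\psi\nabla\phi\in H^1_0(\Omega)$, and the vanishing of all boundary terms thanks to $\supp\phi\Subset\Omega$ --- are routine.
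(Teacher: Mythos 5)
Your proof is correct, and the algebraic core is the same as the paper's: both reduce \eqref{key} to the observation that commuting $\Lambda$ past $\nabla^\perp$ and past $\nabla\phi$, using self-adjointness of $\Lambda$ and one antisymmetric integration by parts (with the $\psi\,\partial_i\partial_j\phi$ terms cancelling), reproduces the left-hand side twice, whence the factors $\mez$. The difference is in how the formal manipulation is legitimized. The paper performs the computation directly for $\psi\in H^1_0(\Omega)$, reading the intermediate quantities such as $\nabla^\perp\Lambda\psi\cdot\nabla\phi$ as elements of $H^{-1}(\Omega)$ paired against $\psi\in H^1_0(\Omega)$, and the self-adjointness step as a duality pairing; no approximation is needed, and Theorem \ref{Commutator:CN} is not invoked until the passage to the limit in Step 3. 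You instead prove the identity cleanly on the dense subspace $C_0^\infty(\Omega)$ (where, as you correctly note, $\Lambda\psi\in C^\infty(\overline\Omega)$ even though it is not compactly supported) and then pass to general $\psi\in H^1_0(\Omega)$ by continuity of all three terms. This buys a cleaner justification of the individual terms --- in particular it avoids having to make sense of $\Lambda\nabla^\perp\psi$ for $\psi$ merely in $H^1_0(\Omega)$ --- but at the price of importing Theorem \ref{Commutator:CN} (with $s=1$, $p=2$) already into the proof of the lemma, to control the term $\int[\Lambda,\nabla^\perp]\psi\cdot\nabla\phi\,\psi\,dx$ under $L^2$-perturbations of $\psi$; this is exactly the estimate the paper uses later for $I_m^1$ and $I_m^2$, so nothing extraneous is required. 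Both routes are valid, and your closing remark correctly identifies why only the commutator, and not its two halves separately, is a genuine locally bounded function for rough $\psi$.
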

\begin{proof}
First, we note that 
\[
\int_\Omega \tt u\cdot \nabla \phi dx =\int_{\Omega}\Lambda \psi\nabla^\perp\psi\cdot \nabla \phi dx  = -\int_{\Omega}\psi\nabla^\perp\L\psi\cdot \nabla \phi dx,
\]
where we integrated by parts and used the fact that $\na^{\perp}\cdot\na\phi = 0$. The first and middle terms are well defined because $\theta u\in L^1(\Omega)$ because both $\L\psi$ and $\na^{\perp}\psi$ are in $L^2(\Omega)$. The last term is defined because $\na\phi\cdot \na^{\perp}\L\psi\in H^{-1}(\Omega)$ and $\psi\in H_0^1(\Omega)$. 
Commuting $\nabla^\perp$ with $\Lambda$ and then with $\nabla \phi$ leads to
\[
\begin{aligned}
\int_\Omega \tt u\cdot \nabla \phi dx&=-\int_\Omega   \psi [\nabla^\perp,\Lambda]\psi \cdot \nabla\phi  dx-\int_\Omega   \psi \Lambda \nabla^\perp\psi\cdot \nabla\phi dx\\
&=-\int_\Omega  \psi [\nabla^\perp,\Lambda]\psi\cdot \nabla\phi dx-\int_\Omega  \nabla^\perp\psi\cdot \Lambda(\psi \nabla\phi) dx\\
&=-\int_\Omega  [\nabla^\perp,\Lambda]\psi\cdot \nabla\phi\psi dx-\int_\Omega  \nabla^\perp\psi\cdot [\Lambda, \nabla\phi] \psi dx-\int_\Omega  \nabla^\perp\psi\cdot \nabla \phi \Lambda\psi dx\\
&= -\int_\Omega  [\nabla^\perp,\Lambda]\psi\cdot \nabla\phi\psi dx-\int_\Omega  \nabla^\perp\psi\cdot [\Lambda, \nabla\phi] \psi dx-\int_\Omega  \tt u\cdot \nabla \phi dx.
\end{aligned}
\]
Noticing that the last term on the right-hand side is exactly the negative of the left-hand side, we proved \eqref{key}.
\end{proof}
Let us fix $\tt_0\in L^2(\Omega)$ and a positive time $T$.\\
{\bf Step 1.} (Galerkin approximation)  The $m$th Galerkin approximation of \eqref{SQG} is the following ODE system in the finite dimensional space $\P_mL^2(\Omega) = L^2_m$:
\bq\label{Galerkin}
\begin{cases}
\dot \tt_m+\P_m(u_m\cdot\nabla\tt_m)=0&\quad t>0,\\
\tt_m=P_m\tt_0&\quad t=0
\end{cases}
\eq
with $\tt_m(x, t)=\sum_{j=1}^m\tt_ j^{(m)} (t)w_j(x)$ and $u_m={R_D}^\perp\tt_m$ automatically satisfying $\cnx u_m=0$. Note that in general $u_m\notin L^2_m$.
The existence of solutions of \eqref{Galerkin} at fixed $m$ follows from the fact that this is an ODE:
\[
\frac{d\theta^{(m)}_l}{dt} + \sum_{j,k=1}^m\gamma^{(m)}_{jkl}\theta^{(m)}_j\theta^{(m)}_{k} = 0
\label{galmode}
\]
with
\[
\gamma^{(m)}_{jkl} = \lambda_j^{-\frac{1}{2}}\int_{\Omega}\left(\na^{\perp}w_j\cdot\na w_k\right)w_ldx.
\]
Since $\P_m$ are self-adjoint in $L^2$ , $u_m$ are divergence-free and $w_j$ vanish at the boundary $\partial\Omega$, an integration by parts gives
\[
\int_\Omega \tt_m\P_m(u_m\cdot\nabla \tt_m)dx=\int_\Omega \tt_m u_m\cdot\nabla \tt_mdx=0\quad\forall m\in \mathbb N.
\]
 It follows that $\mez\frac{d}{dt}\Vert \tt_m(\cdot, t)\Vert^2_{L^2}=0$ and thus for all $t\in [0, T]$
\bq\label{L^2bound}
\mez\Vert \tt_m(\cdot, t)\Vert^2_{L^2(\Omega)}=\mez\Vert \P_m\tt_0(\cdot, 0)\Vert^2_{L^2(\Omega)}\le \mez \Vert \tt_0\Vert^2_{L^2(\Omega)}.
\eq
This can be seen directly on the ODE because $\gamma^{(m)}_{jkl}$ is antisymmetric in $k,l$.
Therefore, the smooth solution $\tt_m$ of \eqref{Galerkin} exists globally and obeys the $L^2$ bound \eqref{L^2bound}. Let $\phi\in C^\infty_0((0, T)\times \Omega)$ be a test function.  Integrating by parts we obtain 
\bq\label{weak:approx} 
\int_0^T\int_\Omega\tt_m(x, t)\partial_t\phi(x, t) dxdt+\int_0^T \int_\Omega \tt_m(x, t)u_m(x, t)\cdot\nabla \P_m\phi(x, t)dxdt=0.
\eq
Let us denote
\begin{equation}
\psi_m =\L^{-1}\tt_m\in L^2_m.
\label{psim}
\end{equation}

We also have
\[
\int_{\Omega}\psi_m\P_m(u_m\cdot \nabla \tt_m)dx=\int_{\Omega}\psi_m \cnx(\na^\perp\psi_m \tt_m)dx=-\int_{\Omega}\na \psi_m \cdot\na^\perp\psi_m\tt_mdx=0,
\]
and therefore
\bq\label{Hm}
\int_{\Omega}\psi_m(x, t)\tt_m(x, t)dx=\int_{\Omega}\psi_m(x, 0)\tt_m(x, 0)dx\quad\forall t\ge 0,~m\in \mathbb N.
\eq
{\bf Step 2.} (Weak and strong convergences). In view of \eqref{L^2bound} the sequence $\tt_m$ is uniformly in $m$ bounded in $L^\infty([0, T]; L^2(\Omega))$ and consequently the same is true for $u_m = R_D^\perp \tt_m$. The sequence $\psi_m = \L^{-1}\theta_m$ is uniformly bounded in $L^{\infty}([0,T]; H^1_0(\Omega))$. In addition, the sequence $\pa_t \psi_m$ is bounded in $L^{\infty}([0,T]; H^{-r}(\Omega))$ for $r> \fr{3d}{2}$. Indeed, from the equation (\ref{Galerkin}) we have that
\be
\pa_t\psi_m = -\L^{-1}\P_m(\na\cdot(u_m\theta_m)) =  \P_m (R_D)^*\cdot(u_m\theta_m)
\label{dtpsim}
\ee
because $\L^{-1}$ and $\P_m$ commute, and the $L^2(\Omega)$ formal adjoint of $R_D$ is $R_D^* = -\L^{-1}\na$.
Testing with a test function $\phi$ we have
\[
\int_{\Omega}\pa_t\psi_m \phi dx = \int_{\Omega}(u_m\theta_m)\cdot R_D(\P_m\phi)dx
\]
and by taking $\phi\in H_0^r(\Omega)$ we made sure that $\P_m\phi$ is uniformly in $m$  bounded in $X_{\alpha}(\Omega) =\{p\in C^{\alpha}(\overline{\Omega}),\;\; p_{\left |\right. \pa\Omega}= 0\}$. Indeed, the expansion coefficients $\phi_j$ decay as in (\ref{phij}), (\ref{cn}), and choosing $N=\fr{r}{2}>\fr{k+d}{2}$, $k>\fr{d}{2}$ ensures the uniform bound of $\P_m\phi$ in $H^k(\Omega)\cap H_0^1(\Omega)\subset X_{\alpha}(\Omega)$.
Now it is known that $R_D$ maps continuously $X_{\alpha}(\Omega)$ to
$L^{\infty}(\Omega)$ (and better, \cite{CabTan}). Therefore, from the uniform bound on $u_m\theta_m$ in $L^{\infty}([0,T], L^1(\Omega))$ it follows that
\be
\left|\int_{\Omega}\pa_t\psi_m \phi dx\right| \le C\|\tt_0\|_{L^2(\Omega)}^2\|\phi\|_{L^{1}([0,T]; H_0^r(\Omega))}.
\label{dtpsimb}
\ee
In view of the compact embedding $H_0^1(\Omega)\subset H_0^{1-\epsilon}(\Omega)$  we may use the Aubin-Lions lemma \cite{Lions} with spaces $L^2([0,T]; H_0^1(\Omega))$ and $L^2([0,T]; H^{-r}(\Omega))$ to extract a subsequence of $\psi_m$ which converges weakly in $L^2([0,T]; H_0^1(\Omega))$ to a function $\psi$ and such that the convergence is strong in $C([0,T]; H_0^{1-\eps}(\Omega))$ for $\eps\in(0,1]$. By lower semicontinuity we have also that $\psi\in L^{\infty}([0,T]; H_0^{1}(\Omega))$. The function $\theta = \L\psi$ is then the weak limit in $L^2([0,T]; L^2(\Omega))$ of the sequence $\theta_m$ and the strong limit in $C([0,T]; H^{-\eps}(\Omega))$. The function $\theta$ belongs to $L^{\infty}([0,T]; L^2(\Omega))$.

{\bf Step 3.} (Passage to limit)  Let the test function $\phi\in C^\infty_0((0, T)\times \Omega)$ be fixed. We first apply \eqref{error:Pm} (uniformly in $t$) and Sobolev embedding to deduce
\[
\lim_{m\to \infty}\Vert \na(\I-\P_m)\phi\Vert_{L^\infty([0, T]\times \Omega)}=0,
\]
and hence the difference
\bq\label{errorPm}
\int_0^T\int_\Omega \tt_m(x, t){R_D}^\perp\tt_m(x, t)\cdot\nabla \P_m\phi(x, t)dxdt-\int_0^T\int_\Omega \tt_m(x, t){R_D}^\perp\tt_m(x, t)\cdot\nabla \phi(x, t)dxdt
\eq
converges to $0$ as $m\to \infty$. Next, using Lemma \ref{commu:key} we write 
\[
\begin{aligned}
I_m&:=\int_0^T\int_\Omega \tt_m(x, t)u_m(x, t)\cdot\nabla \phi(x, t)dxdt-\int_0^T\int_\Omega \tt(x, t)u(x, t)\cdot\nabla \phi(x, t)dxdt\\
&=\mez\int_0^T\int_\Omega  [\Lambda, \nabla^\perp](\psi_m-\psi)\cdot \nabla \phi\psi dxdt+\mez \int_0^T\int_\Omega  [\Lambda, \nabla^\perp]\psi_m\cdot \nabla \phi(\psi_m-\psi) dxdt\\
&\quad-\mez\int_0^T\int_\Omega  \nabla^\perp(\psi_m-\psi)\cdot [\Lambda, \nabla\phi] \psi dxdt-\mez\int_0^T\int_\Omega  \nabla^\perp\psi_m\cdot [\Lambda, \nabla \phi] (\psi_m-\psi) dxdt\\
&=:\mez\sum_{j=1}^4 I_m^j.
\end{aligned}
\]
According to Theorem \ref{Commutator:CN},
\[
\la [\Lambda, \nabla^\perp](\psi_m(x,t)-\psi(x, t))\ra\le C_1d(x)^{-3}\Vert \psi_m(t)-\psi(t)\Vert_{L^2(\Omega)}\le C_2\Vert \psi_m(t)-\psi(t)\Vert_{L^2(\Omega)}
\]
on the support of $\nabla \phi$ which stays away from the boundary. By virtue of the strong convergence of $\psi_m$ in $L^2([0,T]; L^2(\Omega))$ we deduce that
\[
\la I^1_m\ra\le C\Vert \psi_m-\psi\Vert_{L^2([0, T]; L^2(\Omega))}\Vert \nabla\phi\Vert_{L^\infty([0, T]; L^{\infty}(\Omega))}\Vert \psi\Vert_{L^2([0, T]; L^2(\Omega))}\to 0
\]
as $m\to\infty$. The same argument leads to $I_m^2\to 0$. Next, because of of Theorem \ref{Commutator:CI}, $[\Lambda, \nabla\phi] \psi \in L^2([0,T]; {\mathcal{D}}(\Lambda^\mez))\subset L^2([0, T]; H_0^1(\Omega))$ which combined with the fact that $\nabla^\perp(\psi_m-\psi)\rightharpoonup 0$ weakly in $L^2([0, T]; L^2(\Omega))$, implies that $I_m^3\to 0$. Regarding $I_m^4$ we apply Theorem \ref{Commutator:CI} to have 
\[
\la I_m^4\ra\le C\Vert \nabla \psi_m\Vert_{L^2([0, T]; L^2(\Omega))}\Vert \na\phi\Vert_{L^\infty ([0, T];B(\Omega))}\Vert \psi_m-\psi\Vert_{L^2([0, T]; \mez, D)}.
\]
 In view of (\ref{identify}),  $\psi_m\to \psi$ in $L^2([0, T]; \mez, D)$. Consequently, $I_m^4\to 0$ and thus $I_m\to 0$. Sending $m$ to $\infty$ in \eqref{weak:approx} and taking \eqref{errorPm} into account, we obtain \eqref{weak}. Moreover, because of the strong continuity of $\theta $ in $H^{-\epsilon}$ the initial data is attained
\[
\tt_0(\cdot, 0)=\lim_{m\to \infty}\tt_m(\cdot, 0)=\lim_{m\to \infty}\P_m\tt_0(\cdot, 0)=\tt_0(\cdot, 0)\quad\text{in}~H^{-\eps},
\]
where the third equality actually holds in $L^2$. The conservation in time of the Hamiltonian follows from the constancy in time of $H_m(t)=\int_{\Omega}\psi_m \L\psi_m dx = \|\psi_m\|^2_{\mez, D}$ (\ref{Hm}). From strong convergence 
of $\psi_m$ to $\psi$ in $C([0,T]; {\mathcal D}(\L^{\mez}))\subset C([0,T]; H^{\mez}_0(\Omega))$ it follows that $H(t) = \|\psi\|^2_{\mez, D}$ is constant in time. Finally, the energy inequality \eqref{energyin} follows from \eqref{L^2bound} and lower semicontinuity.

\section*{Appendix: Proof of Theorem \ref{Commutator:CN}}\label{appendix}
In view of the identity 
\[
\lambda^{\frac s2}=c_s\int_0^\infty t^{-1-\frac s2}(1-e^{-t\lambda})dt
\] 
with $0< s<2$ and 
\[
1=c_s\int_0^\infty t^{-1-\frac s2}(1-e^{-t})dt
\]
we have the representation of the fractional Laplacian via heat kernel:
\bq\label{frac}
\Lambda^s\psi(x)=c_s\int_0^\infty t^{-1-\frac s2}(1-e^{t\Delta})\psi(x)dt,\quad 0<s<2.
\eq
Let $H(x, y, t)$ denote the heat kernel of $\Omega$, {\it {\it i.e.}}
\[
e^{t\Delta} \psi(x)=\int_\Omega H(x, y, t)\psi(y)dy\quad\forall x\in \Omega.
\]
We have from \cite{LiYau} the following bounds on $H$ and its gradient:
\bq\label{H}
ct^{-\frac d2}e^{-\frac{|x-y|^2}{kt}}\le H(x, y, t)\le Ct^{-\frac d2}e^{-\frac{|x-y|^2}{Kt}},
\eq
\bq\label{gradH}
|\nabla_x H(x, y, t)|\le Ct^{-\mez-\frac d2}e^{-\frac{|x-y|^2}{Kt}}
\eq
for all $(x,y)\in \Omega\times \Omega$ and $t>0$. In view of the expansion
\[
H(x, y, t)=\sum_{j=1}^\infty e^{-\lambda_jt}w_j(x)w_j(y)
\]
it is easily seen that $|\nabla_y H(x, y, t)|$ also obeys the bound \eqref{gradH}.\\
Using \eqref{frac} and integration by parts we arrive at
\bq\label{commute}
[\Lambda^s, \nabla]\psi(x)=c_s\int_0^\infty t^{-1-\frac s2}\int_\Omega (\nabla_x+\nabla_y)H(x, y, t) \psi(y)dydt.
\eq
Let $p\in (1, \infty]$ and $\frac 1q=1-\frac 1p$. We have
\bq\label{bound1}
\la [\Lambda^s, \nabla]\psi(x)\ra\le c_s\Vert \psi\Vert_{L^p}\int_0^\infty t^{-1-\frac s2}\left[\int_\Omega \la (\nabla_x+\nabla_y)H(x, y, t)\ra^q dy\right]^{\frac{1}{q}}dt.
\eq
The problem reduces thus to estimating the $L^q$-norm of $(\nabla_x+\nabla_y)H(x, \cdot, t)$. We distinguish two regions of $y$: $|x-y|\ge \frac{d(x)}{10}$ and $|x-y|\le \frac{d(x)}{10}$. We use the elementary estimate
\bq\label{integral}
\int_0^\infty t^{-1-\frac m2}e^{-\frac{p^2}{Kt}}dt\le C_{K, m}p^{-m},\quad m, p, K>0.
\eq
If $|x-y|\ge \frac{d(x)}{10}$, the gradient bound \eqref{gradH} implies
\[
\la(\nabla_x+\nabla_y)H(x, y, t)\ra \le Ct^{-\mez-\frac d2}e^{-\frac{d(x)^2}{200Kt}}e^{-\frac{|x-y|^2}{2Kt}}\quad \forall t>0,
\]
hence, in view of \eqref{integral},
\[
\begin{aligned}
&\int_0^\infty t^{-1-\frac s2}\left[\int_{|x-y|\ge \frac{d(x)}{10}}\la (\nabla_x+\nabla_y)H(x, y, t)\ra^q dy\right]^{\frac{1}{q}}dt\\
&\le C_1\int_0^\infty t^{-1-\frac s2-\mez-\frac d2}e^{-\frac{d(x)^2}{200t}}dt\left[\int_{|x-y|\ge \frac{d(x)}{10}} e^{-\frac{q|x-y|^2}{2Kt}}dy\right]^{\frac{1}{q}}\\
&\le C_2\int_0^\infty t^{-1-\frac s2-\mez-\frac d2+\frac{d}{2q}}e^{-\frac{d(x)^2}{200t}}dt\\
&\le Cd(x)^{-s-1-d+\frac dq}.
\end{aligned}
\]
On the other hand, if $|x-y|\le \frac{d(x)}{10}$ we have from Appendix 1 of \cite{ConIgn2} 
\bq\label{cancel}
\la(\nabla_x+\nabla_y)H(x, y, t)\ra\le Ct^{-\mez-\frac d2}e^{-\frac{d(x)^2}{Ct}},\quad t\le d(x)^2.
\eq
Note that in $\Rr^d$, $(\nabla_x+\nabla_y)H$ vanishes identically. \eqref{cancel} thus reflects the fact that translation invariance is remembered in the solution of the heat equation with Dirichlet boundary data for short time, away from the boundary. The bound \eqref{integral} then yields 
\[
\begin{aligned}
&\int_0^{d(x)^2} t^{-1-\frac s2}\left[\int_{|x-y|\le \frac{d(x)}{10}}\la (\nabla_x+\nabla_y)H(x, y, t)\ra^q dy\right]^{\frac 1q}dt\\
&\le C_1\int_0^{d(x)^2}t^{-1-\frac s2-\mez-\frac d2}d(x)^{\frac dq}e^{-\frac{d(x)^2}{C_2t}}dt\\
&\le C_3\int_0^{d(x)^2}t^{-1-\frac s2-\mez-\frac d2+\frac{d}{2q}}e^{-\frac{d(x)^2}{C_4t}}dt\\
&\le Cd(x)^{-s-1-d+\frac{d}{q}}.
\end{aligned}
\]
To obtain the bound for $[\Lambda^s, \nabla]\psi(x)$, it remains to estimate 
\[
I=\int_{d(x)^2}^\infty t^{-1-\frac s2}\left[\int_{|x-y|\le \frac{d(x)}{10}}\la (\nabla_x+\nabla_y)H(x, y, t)\ra^q dy\right]^{\frac 1q}dt.
\]
Using the gradient bound \eqref{gradH} we have
\[
\begin{aligned}
I&\le C_1\int_{d(x)^2}^\infty t^{-1-\frac s2-\mez-\frac d2}\left[\int_{|x-y|\le \frac{d(x)}{10}}e^{-q\frac{|x-y|^2}{Kt}} dy\right]^{\frac 1q}dt\\
&\le C_2\int_{d(x)^2}^\infty t^{-1-\frac s2-\mez-\frac d2+\frac{d}{2q}}dt\\
&\le Cd(x)^{-s-1-d+\frac{d}{q}}.
\end{aligned}
\]
Putting the above considerations together we arrive at the pointwise estimate
\[
\la [\Lambda^s, \nabla]\psi(x)\ra\le Cd(x)^{-s-1-\frac dp}\Vert \psi\Vert_{L^p}
\]
for all $p\in (1, \infty]$. The case $p=1$ can be proved along the same lines.

{\bf{Acknowledgment.}} The work of PC was partially supported by  NSF grant DMS-1209394

\end{document}